\newtheorem{thm}{Theorem}[section]
\newtheorem{cor}[thm]{Corollary}
\newtheorem{lem}[thm]{Lemma}
\numberwithin{equation}{section}
\author {Ya\c{s}ar Polato\~{g}lu, Yasemin Kahramaner and Arzu Yemi\c{s}ci \c{S}en  }
\title {Some Results of The Class of Functions with Bounded Radius Rotation
\footnotetext{\textit{\textnormal{2010} Mathematics Subject Classification:}
30C45 \\ \textit{Key words and phrases:}  Bounded radius rotation, bounded boundary rotation, distortion theorem, growth theorem and coefficient inequality.
}}
\date{ }
\newcommand{\pe}{{\mathcal P}}
\newcommand{\de}{{\mathbb D}}
\begin{document}
\maketitle

\begin{abstract}
 Let $\mathcal{A}$ be the family of functions $f(z)=z+a_2z^2+...$ which are analytic in the open unit disc $\mathbb{D}=\{z: |z|<1 \}$, and denote by $\pe$ of functions $p(z)=z+p_1z+p_2z^2+...$ analytic in $\de$ such that $p(z)$ is in $\pe$ if and only if
 $$p(z)\prec \frac{1+z}{1-z} \Leftrightarrow p(z)=\frac{1+\phi(z)}{1-\phi(z)}, $$
for some Schwarz function $\phi(z)$ and every $z\in\de.$

Let $f(z)$ be an element of $\mathcal{A}$, and satisfies the condition 
$$z\frac{f'(z)}{f(z)}=\left(\frac{k}{4}+\frac{1}{2}\right)p_{1}(z)-\left(\frac{k}{4}-\frac{1}{2}\right)p_{2}(z)$$
where $p_1(z), p_2(z)\in \pe$ and $k\geq 2$, then $f(z)$ is called function with bounded radius rotation. The class of such functions is denoted by $R_k$. This class is generalization of starlike functions.

The main purpose is to give some properties of the class $R_k$.
\end{abstract}

\maketitle

\section{Introduction}
Let $ \Omega $ be the family of functions $\phi(z)$ which are analytic in  $\de$ and satisfy the conditions $\phi(0)=0$, $\lvert \phi(z)\rvert<1$ for all $z \in \de$. If $f_{1}(z)$ and $f_{2}(z)$ are analytic functions in $\de$, then we say that $f_{1}(z)$ is subordinate to $f_{2}(z)$, written as $f_{1}(z)\prec f_{2}(z)$ if there exists a Schwarz function $\phi\in \Omega\ $ such that $f_{1}(z)=f_{2}(\phi(z)), z\in\de$. We also note that if $f_{2}$ univalent in  $\de$ , then  $f_{1}(z)\prec f_{2}(z)$ if and only if $f_{1}(0) =f_{2}(0)$, $f_{1}(\de) \subset f_{2}(\de)$ implies $f_{1}(\de_{r}) \subset f_{2}(\de_{r})$, where $ \de_{r} =\{z: |z|<r, 0<r<1 \}$ (see \cite{Goodman 1984}). Denote by $\pe$ the family of functions  $p(z)=1+p_{1}z+p_{2}z^{2}+p_{3}z^{3}+\cdots$ analytic in  $\de$ such that $p$ is in $\pe$ if and only if
\begin{equation}
	p(z)\prec \frac{1+z}{1-z} \Leftrightarrow p(z)=\frac{1+w(z)}{1-w(z)}, z\in\de
\end{equation} 
Let $f(z)$ be an element of $\mathcal{A}$. Then $f(z)$ is called convex or starlike if it maps $\mathbb{D}$ onto a convex or starlike region, respectively. Corresponding classes are denoted by $\mathcal{C}$ and $S^{\ast}$. It is well known that $\mathcal{C}\subset S^{\ast},$ that both are subclasses of the univalent functions and have the following analytical representations.
\begin{equation}
	f(z) \in \mathcal{C} \Longleftrightarrow  Re\bigg(1+z\frac{f''(z)}{f'(z)} \bigg)>0,\quad z\in\mathbb{D}
\end{equation}
and
\begin{equation}
	f(z) \in S^{\ast}  \Longleftrightarrow  Re\bigg(z\frac{f'(z)}{f(z)}\bigg )>0,\quad z\in\mathbb{D}
\end{equation}
More on these class can be found in [2]. Let $f(z)$ be an element of $\mathcal{A}$. If there is a function $g(z)$ in $\mathcal{C}$ such that 
\begin{equation}
	Re\bigg(\frac{f'(z)}{g'(z)}\bigg )>0,\quad z\in\mathbb{D}
\end{equation}
then $f(z)$ is called close-to-convex function in $\mathbb{D}$ and the class of such functions is denoted by $\mathcal{CC}$.

A function analytic and locally univalent in a given simply connected domain is said to be of bounded boundary rotation if its range has bounded boundary rotation which is defined as the total variation of the direction angle of the tangent to the boundary curve under a complete circuit. Let $V_k$ denote the class of functions $f(z)\in \mathcal{A}$ which maps $\mathbb{D}$ conformally onto an image domain of boundary rotation at most $k\pi$. The class of functions of bounded boundary rotation was introduced by Loewner \cite{Loewner 1971} in 1917 and was developed by Paatero \cite{Paatero 1931, Paatero 1933} who systematically developed their properties and made an exhaustive study of the class $V_k$. Paatero has shown that $f(z)\in V_k $ if and only if    
\begin{equation}
	f'(z)=Exp\left[-\int^{2\pi}_{0}\log\left(1-ze^{-it}\right)d\mu(t)\right],
\end{equation}
where $\mu(t)$ is real-valued function of bounded variation for which
\begin{equation}
	\int^{2\pi}_{0} d\mu(t)=2 \quad\text{and}\quad \int^{2\pi}_{0} |d\mu(t)| \leq k
\end{equation}
for fixed $k\geq 2$ it can also be expressed as
\begin{equation}\label{e1.10}
	\int^{2\pi}_{0}\left|Re\frac{(zf'(z))'}{f'(z)}\right|d\theta\leq 2k\pi, \quad z=re^{i\theta}.
\end{equation}
Clearly, if $k_{1}<k_{2}$ then $V_{k_{1}}\subset V_{k_{2}}$ that is the class $V_k$ obviously expands on $k$ increases. $V_2$ is the class of $\mathcal{C}$ of convex univalent functions.
Paatero showed that $V_4 \subset \mathcal{S}$, where $\mathcal{S}$ is the class of normalized univalent functions. Later Pinchuk proved that  $V_k$ are close-to convex functions in $\de$ if $2\leq k\leq 4$ \cite{Pinchuk 1971}.

Let $R_k$ denote the class of analytic functions $f$ of the form $f(z)=z+a_2z^2+a_3z^3+...$ having the representation 
\begin{equation}
f(z)=zExp\left[-\int^{2\pi}_{0}\log\left(1-ze^{-it}\right)d\mu(t)\right],
\end{equation}
where $\mu(t)$ is given in (1.6). We note that the class $R_k$ was introduced by Pinchuk and Pinchuk showed that Alexander type relation between the classes $V_k$ and $R_k$ exists,
\begin{equation}
f\in V_k \Leftrightarrow zf'(z) \in R_k
\end{equation}
$R_k$ consists of those function $f(z)$ which satisfy 
\begin{equation}\label{e1.10}
\int^{2\pi}_{0}\left|Re(re^{i\theta}\frac{f'(re^{i\theta})}{f(re^{i\theta})})\right|d\theta\leq k\pi, z=re^{i\theta}.
\end{equation}
Geometrically, the condition is that the total variation of angle between radius vector $f(re^{i\theta})$ makes with positive real axis is bounded $k\pi$. Thus, $R_k$ is the class of functions of bounded radius rotation bounded by $k\pi$, therefore  $R_k$ generalizes the starlike functions.

$P_k$  denote the class of functions $p(0)=1$ analytic in $\de$ and having representation 
\begin{equation}\label{e1.10}
p(z)=\int^{2\pi}_{0}\frac{1+ze^{-it}}{1-ze^{-it}}d\mu(t)
\end{equation}
where $\mu(t)$ is given in (1.6). Clearly, $P_2=P$ where $P$ is the class of analytic functions with positive real part. For more details see \cite{Pinchuk 1971}. From (1.11), one can easily find that $p(z)\in P_k$ can also written by
\begin{equation}
 p(z)=\left(\frac{k}{4}+\frac{1}{2}\right)p_{1}(z)-\left(\frac{k}{4}-\frac{1}{2}\right)p_{2}(z),\quad z\in\de
\end{equation}
where $p_{1}(z),p_{2}(z)\in\pe$. Pinchuk \cite{Pinchuk 1971}  has shown that the classes $V_k$ and  $R_k$ can be defined by using the class $P_k$ as gives below 
\begin{equation}
f\in V_k \Leftrightarrow \frac{(zf'(z))'}{f'(z)} \in P_k
\end{equation}
and
\begin{equation}
f\in R_k \Leftrightarrow \frac{zf'(z)}{f(z)} \in P_k
\end{equation}
At the same time, we note that $V_k$ generalizes of convex functions.
\section{Main Results}
\begin{lem} Let $p(z)$ be an element of $P_k$, then
\begin{equation}
\bigg|p(z)-\frac{1+r^2}{1-r^2}\bigg|\leq \frac{kr}{1-r^2}
\end{equation}	
\end{lem}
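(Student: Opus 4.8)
The plan is to reduce the estimate for $p\in P_k$ to the corresponding classical estimate for the Carathéodory class $\pe$, using the decomposition recorded in the introduction. First I would recall that, by (1.13), any $p\in P_k$ can be written as
\begin{equation*}
p(z)=\left(\frac{k}{4}+\frac{1}{2}\right)p_{1}(z)-\left(\frac{k}{4}-\frac{1}{2}\right)p_{2}(z),\qquad p_{1},p_{2}\in\pe ,
\end{equation*}
and note the key algebraic identity $\left(\frac{k}{4}+\frac{1}{2}\right)-\left(\frac{k}{4}-\frac{1}{2}\right)=1$, together with the fact that for $k\geq 2$ both coefficients $\frac{k}{4}+\frac{1}{2}$ and $\frac{k}{4}-\frac{1}{2}$ are nonnegative.

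Next I would establish the sharp disc estimate for functions in $\pe$. Since $p_{j}\prec\frac{1+z}{1-z}$, subordination gives $p_{j}(\de_{r})\subset w(\de_{r})$ where $w(z)=\frac{1+z}{1-z}$. A direct computation shows that $w$ maps the disc $|z|\leq r$ onto the closed disc centered at $\frac{1+r^{2}}{1-r^{2}}$ of radius $\frac{2r}{1-r^{2}}$; hence for $|z|=r$,
\begin{equation*}
\left|p_{j}(z)-\frac{1+r^{2}}{1-r^{2}}\right|\leq\frac{2r}{1-r^{2}},\qquad j=1,2 .
\end{equation*}

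Then, using the identity above to subtract $\frac{1+r^{2}}{1-r^{2}}=\left(\frac{k}{4}+\frac{1}{2}\right)\frac{1+r^{2}}{1-r^{2}}-\left(\frac{k}{4}-\frac{1}{2}\right)\frac{1+r^{2}}{1-r^{2}}$, I would write
\begin{equation*}
p(z)-\frac{1+r^{2}}{1-r^{2}}=\left(\frac{k}{4}+\frac{1}{2}\right)\left(p_{1}(z)-\frac{1+r^{2}}{1-r^{2}}\right)-\left(\frac{k}{4}-\frac{1}{2}\right)\left(p_{2}(z)-\frac{1+r^{2}}{1-r^{2}}\right),
\end{equation*}
and apply the triangle inequality together with the $\pe$-estimate to obtain
\begin{equation*}
\left|p(z)-\frac{1+r^{2}}{1-r^{2}}\right|\leq\left[\left(\frac{k}{4}+\frac{1}{2}\right)+\left(\frac{k}{4}-\frac{1}{2}\right)\right]\frac{2r}{1-r^{2}}=\frac{k}{2}\cdot\frac{2r}{1-r^{2}}=\frac{kr}{1-r^{2}} .
\end{equation*}

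This argument is essentially routine; there is no serious obstacle. The only points requiring care are verifying the exact image of $|z|\leq r$ under $\frac{1+z}{1-z}$ (so that the center and radius match the statement) and using the hypothesis $k\geq 2$ to ensure $\frac{k}{4}-\frac{1}{2}\geq 0$, which is what makes the triangle inequality collapse to the clean bound $\frac{kr}{1-r^{2}}$ rather than something larger.
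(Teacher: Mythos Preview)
Your argument is correct, but it proceeds along a different line from the paper. The paper does not use the decomposition (1.12) at all for this lemma; instead it invokes the characterization $f\in V_k\Leftrightarrow 1+z\frac{f''(z)}{f'(z)}\in P_k$ and then quotes Robertson's inequality $\bigl|z\frac{f''(z)}{f'(z)}-\frac{2r^2}{1-r^2}\bigr|\le\frac{kr}{1-r^2}$ for $f\in V_k$, which after the shift by $1$ becomes exactly the stated bound on $p$. Your route is more self-contained: you stay entirely inside $P_k$, using only the convex-combination structure $p=(\frac{k}{4}+\frac{1}{2})p_1-(\frac{k}{4}-\frac{1}{2})p_2$ and the elementary disc estimate for the Carath\'eodory class, with no appeal to an external theorem or to the auxiliary class $V_k$. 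The paper's approach has the advantage of tying the lemma to a known sharp result in the literature, but it tacitly assumes that every $p\in P_k$ actually arises as $1+z f''/f'$ for some $f\in V_k$, which is an extra (if standard) step; your approach avoids that issue and would be easier to adapt to variants of $P_k$ built from other base classes.
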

\begin{proof} Let $f(z)$ be an element of $h(z) \in V_k$. Using (1.13), we can write
\begin{equation}
	p(z)=1+\frac{f''(z)}{f'(z)}, p(z)\in \pe_k
\end{equation}	
On the other hand M.S. Robertson \cite{Robertson} proved that if $f(z)\in V_k$, then
\begin{equation}	 
\bigg|z\frac{f''(z)}{f'(z)}-\frac{2r^2}{1-r^2}\bigg|\leq \frac{kr}{1-r^2}
\end{equation}
Therefore the relation can be written in the following form,
\begin{equation}
\bigg|(1+z\frac{f''(z)}{f'(z)})-\frac{1+r^2}{1-r^2}\bigg|\leq \frac{kr}{1-r^2}
\end{equation}
Using the definition of the class $V_k$, we obtain (2.1).
\end{proof}
\begin{thm}  Let $f(z)$ be an element of $R_k$, then
\begin{equation}
\frac{r}{(1-r)^{\frac{2-k}{2}}(1+r)^{\frac{2+k}{2}}} \leq |f(z)| \leq \frac{r}{(1-r)^{\frac{2+k}{2}}(1+r)^{\frac{2-k}{2}}} 
\end{equation}
\begin{equation}
\frac{1-kr+r^2}{(1-r)^{2-\frac{k}{2}}(1+r)^{2+\frac{k}{2}}} \leq |f'(z)| \leq \frac{1+kr+r^2}{(1-r)^{2+\frac{k}{2}}(1+r)^{2-\frac{k}{2}}} 
\end{equation}
\end{thm}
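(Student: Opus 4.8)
The plan is to derive the growth and distortion bounds from the structural differential equation $z f'(z)/f(z) = p(z)$ with $p \in P_k$, using Lemma 2.1 as the key analytic estimate. First I would record the consequence of Lemma 2.1 in real terms: writing $p(z) = z f'(z)/f(z)$ and setting $|z| = r$, the disc inequality $|p(z) - (1+r^2)/(1-r^2)| \le kr/(1-r^2)$ yields the two-sided bound
\begin{equation}
\frac{1-kr+r^2}{1-r^2} \le \operatorname{Re}\!\left(z\frac{f'(z)}{f(z)}\right) \le \frac{1+kr+r^2}{1-r^2},
\end{equation}
and also controls the modulus $|p(z)|$ from above by $(1+kr+r^2)/(1-r^2)$ (triangle inequality with the center and the radius). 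These are the only facts about $P_k$ I will need.

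Next I would integrate. Since $\frac{\partial}{\partial r}\log|f(re^{i\theta})| = \operatorname{Re}\!\big(e^{i\theta} f'/f\big) = \frac{1}{r}\operatorname{Re}\!\big(z f'/f\big)$ along a ray, the lower bound on $\operatorname{Re}(z f'/f)$ gives $\frac{\partial}{\partial r}\log|f| \ge \frac{1-kr+r^2}{r(1-r^2)}$, and the upper bound gives the reverse. The integrand is a rational function of $r$; a partial-fraction decomposition
\begin{equation}
\frac{1\pm kr+r^2}{r(1-r^2)} = \frac{1}{r} + \frac{A}{1-r} + \frac{B}{1+r}
\end{equation}
with $A = \tfrac{2\pm k}{2}\cdot\tfrac{1}{1}$... — more precisely $A=\frac{2\mp(-k)}{2}$-type constants $\frac{2+k}{2}$ and $\frac{2-k}{2}$ in the appropriate order — integrates from $0$ to $r$ (using $f(z)/z \to 1$, i.e. $\log(f/z) \to 0$) to produce exactly the logarithms of $(1-r)^{\mp(2\pm k)/2}$ and $(1+r)^{\mp(2\mp k)/2}$. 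Exponentiating gives (2.6). For (2.7) I would then write $f'(z) = \frac{f(z)}{z}\cdot p(z)$, so $|f'(z)| = \frac{|f(z)|}{r}\,|p(z)|$; bounding $|f(z)|$ above by (2.6) and $|p(z)|$ above by $(1+kr+r^2)/(1-r^2) = (1+kr+r^2)/((1-r)(1+r))$ and combining the powers of $(1\pm r)$ gives the upper bound in (2.7), and symmetrically for the lower bound one needs a lower bound on $|p(z)|$, namely $|p(z)| \ge \operatorname{Re} p(z) \ge (1-kr+r^2)/(1-r^2)$, together with the lower bound in (2.6) on $|f(z)|$.

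The main obstacle is the lower bound in (2.7): using $|p(z)| \ge \operatorname{Re} p(z)$ is only legitimate when $\operatorname{Re} p(z) \ge 0$, i.e. when $1 - kr + r^2 \ge 0$, which fails for $r$ in the middle of $(0,1)$ once $k > 2$. So the stated lower bound for $|f'(z)|$ is genuinely informative only on the sub-interval where $1-kr+r^2 \ge 0$ (outside it the claimed lower bound is negative and hence vacuous), and I would either restrict attention to that range or simply note that the inequality holds trivially otherwise. Apart from this sign subtlety, the only other care needed is bookkeeping the exponents $\frac{2\pm k}{2}$ versus $2\pm\frac{k}{2}$ correctly between the $|f|$ and $|f'|$ estimates — the extra "$2$" in (2.7) comes from the additional factor $1/((1-r)(1+r))$ contributed by $|p(z)|$. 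I expect the verification of Lemma 2.1's reformulation and the partial-fraction integration to be entirely routine; the write-up should emphasize the ray-integration identity and flag the positivity caveat on the lower distortion bound.
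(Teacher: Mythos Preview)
Your approach is essentially the paper's: apply Lemma~2.1 to $p=zf'/f$, pass to the real-part bounds, use $\operatorname{Re}(zf'/f)=r\,\partial_r\log|f|$ and integrate for the growth estimate, then combine $|f'|=\tfrac{|f|}{r}\,|p|$ with modulus bounds on $p$ for the distortion estimate. The only cosmetic difference is that the paper reads off $\tfrac{1-kr+r^2}{1-r^2}\le|p|\le\tfrac{1+kr+r^2}{1-r^2}$ directly from the disc inequality via the reverse triangle inequality (rather than through $|p|\ge\operatorname{Re}\,p$), and it does not flag your valid observation that the lower bound on $|f'|$ is vacuous once $1-kr+r^2<0$.
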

\begin{proof}  Using the definition of $R_k$, then we can write
\begin{equation}
\bigg|z\frac{f'(z)}{f(z)}-\frac{1+r^2}{1-r^2}\bigg|\leq \frac{kr}{1-r^2}
\end{equation} 
This inequality  can be written in the following form,
\begin{equation}
\frac{1-kr+r^2}{1-r^2} \leq Re z\frac{f'(z)}{f(z)} \leq \frac{1+kr+r^2}{1-r^2} 
\end{equation}
On the other hand, we have
\begin{equation}
Re z\frac{f'(z)}{f(z)}=r.\frac{\partial}{\partial r}log|f(z)|
\end{equation}
Thus we have
\begin{equation}
\frac{1-kr+r^2}{r(1-r^2)} \leq \frac{\partial}{\partial r}log|f(z)| \leq \frac{1+kr+r^2}{r(1-r^2)} 
\end{equation}
Integrating both sides (2.10), we get (2.5). The inequality (2.7) can be written in the form
\begin{equation}
\frac{1-kr+r^2}{1-r^2} \leq \bigg|z\frac{f'(z)}{f(z)}\bigg| \leq \frac{1+kr+r^2}{1-r^2} 
\end{equation}
In this step, if we use (2.5), we obtain (2.6).
\end{proof}
\begin{cor} For $k=2$ in (2.5), we obtain
$$\frac{r}{(1+r)^2} \leq |f(z)| \leq \frac{r}{(1-r)^2} $$
This is well known growth theorem for starlike functions \cite{Goodman 1984}.
\end{cor}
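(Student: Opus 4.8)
The plan is to read this corollary off directly from the growth estimate (2.5) of Theorem~2.2 by putting $k=2$; no fresh analytic argument is needed, only a simplification of exponents together with the identification of the relevant subclass.

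First I would observe that $R_2=S^{\ast}$. Indeed, by the characterization (1.15) a function $f\in\mathcal{A}$ belongs to $R_k$ precisely when $zf'(z)/f(z)\in P_k$, and, as noted after (1.12), $P_2$ coincides with the class $P$ of functions $p$ with $p(0)=1$ and $Re\,p(z)>0$; comparing this with the analytic description (1.3) of $S^{\ast}$ yields $R_2=S^{\ast}$. Hence the case $k=2$ of (2.5) is genuinely a statement about starlike functions, which is what makes the identification with the classical theorem meaningful.

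Next I would substitute $k=2$ into the exponents occurring in (2.5): $\tfrac{2-k}{2}=0$ and $\tfrac{2+k}{2}=2$. Therefore the factors $(1-r)^{(2-k)/2}$ and $(1+r)^{(2-k)/2}$ appearing in the denominators of the lower and upper bounds respectively collapse to $1$, while $(1+r)^{(2+k)/2}=(1+r)^2$ and $(1-r)^{(2+k)/2}=(1-r)^2$. Inserting these into (2.5) gives
\[
\frac{r}{(1+r)^2}\le |f(z)|\le \frac{r}{(1-r)^2},
\]
which is the asserted inequality, and it is precisely the well-known growth theorem for starlike functions recorded in \cite{Goodman 1984} (sharp for the Koebe function $k(z)=z/(1-z)^2\in S^{\ast}$). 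There is essentially no obstacle here: the only points requiring any care are the elementary exponent arithmetic and the remark $R_2=S^{\ast}$, both immediate from the definitions collected in Section~1.
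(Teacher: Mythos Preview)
Your proposal is correct and follows exactly the approach implicit in the paper: the corollary is stated there without proof, as an immediate specialization of (2.5) with $k=2$, together with the identification $R_2=S^{\ast}$. The only minor slip is a reference number---the characterization $f\in R_k\Leftrightarrow zf'(z)/f(z)\in P_k$ is equation (1.14), not (1.15)---but the argument itself is sound.
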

\begin{cor} For $k=2$ in (2.6), we obtain
$$\frac{1-r}{(1+r)^3} \leq |f'(z)| \leq \frac{1+r}{(1-r)^3} $$
This is well known distortion theorem for starlike functions \cite{Goodman 1984}.
\end{cor}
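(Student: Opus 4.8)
The plan is to read this off directly from the distortion estimate (2.6) of Theorem 2.2 by setting $k=2$; there is no new analytic content here, only the algebraic simplification of the two exponents and the two quadratic numerators. First I would substitute $k=2$ into the exponents in (2.6): $2-\frac{k}{2}$ becomes $1$ and $2+\frac{k}{2}$ becomes $3$, so the left-hand denominator $(1-r)^{2-\frac{k}{2}}(1+r)^{2+\frac{k}{2}}$ reduces to $(1-r)(1+r)^{3}$ and the right-hand denominator $(1-r)^{2+\frac{k}{2}}(1+r)^{2-\frac{k}{2}}$ reduces to $(1-r)^{3}(1+r)$.

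Next I would simplify the numerators. For $k=2$ we have $1-kr+r^{2}=1-2r+r^{2}=(1-r)^{2}$ and $1+kr+r^{2}=1+2r+r^{2}=(1+r)^{2}$. Plugging these into (2.6) gives
$$\frac{(1-r)^{2}}{(1-r)(1+r)^{3}}\leq |f'(z)|\leq \frac{(1+r)^{2}}{(1-r)^{3}(1+r)},$$
and cancelling one factor of $(1-r)$ from the left-hand fraction and one factor of $(1+r)$ from the right-hand fraction yields exactly
$$\frac{1-r}{(1+r)^{3}}\leq |f'(z)|\leq \frac{1+r}{(1-r)^{3}},$$
which is the claimed pair of inequalities. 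Note also that both numerators remain nonnegative for $0\leq r<1$ in this case, so the lower bound is meaningful throughout $\de$.

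Finally I would explain why this deserves to be called the distortion theorem for starlike functions: by (1.15) the class $R_{2}$ corresponds to $\frac{zf'(z)}{f(z)}\in P_{2}=P$, i.e. to $Re\,\frac{zf'(z)}{f(z)}>0$, which by (1.3) is precisely $S^{\ast}$; and the bounds just obtained are the classical sharp ones, with equality for the Koebe function $z/(1-z)^{2}$ (see \cite{Goodman 1984}). I do not anticipate any genuine obstacle in this argument — the only thing to watch is keeping the two exponents and the two numerators correctly paired when specialising $k=2$, and recording that for $k=2$ the lower numerator $(1-r)^{2}$ no longer changes sign, in contrast with the situation for general $k>2$.
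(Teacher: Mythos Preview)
Your proposal is correct and is exactly the approach the paper takes: the corollary is stated without a separate proof because it is the immediate specialisation $k=2$ of (2.6), and your algebraic simplification of the exponents and of the numerators $1\mp 2r+r^{2}=(1\mp r)^{2}$ is precisely what is intended. One tiny fix: the characterisation of $R_{k}$ you cite is equation (1.14), not (1.15).
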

\begin{cor} The radius of starlikeness of $R_k$ is
\begin{equation}
R_{S^\ast}=\frac{k-\sqrt{k^2-4}}{2}, k\geq 2
\end{equation}
\end{cor}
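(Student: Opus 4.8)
The plan is to read off $R_{S^\ast}$ from the sharp lower bound on $\operatorname{Re}\bigl(zf'(z)/f(z)\bigr)$ that was already produced in the course of proving Theorem~2.2, and then to check that this radius is best possible by exhibiting an extremal member of $R_k$.

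First, recall that for $f\in R_k$ and $|z|=r<1$ the estimate
$$\operatorname{Re}\left(z\frac{f'(z)}{f(z)}\right)\geq\frac{1-kr+r^{2}}{1-r^{2}}$$
was obtained inside the proof of Theorem~2.2 (it is the left half of the double inequality there, which in turn comes from Lemma~2.1 applied to $p(z)=zf'(z)/f(z)\in P_k$). Since $1-r^{2}>0$ on $\de$, the right-hand side is strictly positive exactly when the quadratic $q(r)=r^{2}-kr+1$ is positive. For $k\geq2$ the two roots $\tfrac{k\pm\sqrt{k^{2}-4}}{2}$ of $q$ are real and positive and their product equals $1$, so the smaller root $\rho:=\tfrac{k-\sqrt{k^{2}-4}}{2}$ satisfies $\rho\leq1$, with equality only when $k=2$, and $q(r)>0$ for all $0\leq r<\rho$. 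Hence $\operatorname{Re}\bigl(zf'(z)/f(z)\bigr)>0$ whenever $|z|<\rho$, so by (1.3) every $f\in R_k$ is starlike in the disc $|z|<\rho$; this gives $R_{S^\ast}\geq\rho$.

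For the opposite inequality I would take $p_{1}(z)=\dfrac{1+z}{1-z}$ and $p_{2}(z)=\dfrac{1-z}{1+z}$, both of which belong to $\pe$, and let $f_{0}(z)=z+\cdots$ be the function in $R_k$ determined via (1.15) and (1.12) by
$$z\frac{f_{0}'(z)}{f_{0}(z)}=\left(\frac{k}{4}+\frac{1}{2}\right)\frac{1+z}{1-z}-\left(\frac{k}{4}-\frac{1}{2}\right)\frac{1-z}{1+z}.$$
Putting $z=-r$ and collecting the two fractions over the common denominator $1-r^{2}$, a short computation gives
$$-r\,\frac{f_{0}'(-r)}{f_{0}(-r)}=\frac{1-kr+r^{2}}{1-r^{2}},$$
which is $0$ at $r=\rho$ and strictly negative for $\rho<r<1$. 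Thus $f_{0}$ is not starlike in any disc of radius exceeding $\rho$, so $R_{S^\ast}\leq\rho$, and together with the previous step this shows that $R_{S^\ast}=\rho=\tfrac{k-\sqrt{k^{2}-4}}{2}$.

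The argument is essentially forced once the estimate from Theorem~2.2 is in hand; the only points needing any care are the elementary simplification in the evaluation at $z=-r$ and the remark that $\rho\leq1$, so that the disc of starlikeness genuinely lies in $\de$. I do not foresee a real obstacle beyond these routine verifications.
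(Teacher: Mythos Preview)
Your argument is correct and follows the same route as the paper: you invoke the lower bound $\operatorname{Re}\bigl(zf'(z)/f(z)\bigr)\geq(1-kr+r^{2})/(1-r^{2})$ from Theorem~2.2 and read off the root of the quadratic, and for sharpness you use exactly the paper's extremal function $f_\ast$ (your $f_0$), though you supply the explicit evaluation at $z=-r$ that the paper omits. One small fix: the reference ``(1.15)'' does not exist in the paper; you mean (1.14).
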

\begin{proof} Since 
$$Re\bigg(z\frac{f'(z)}{f(z)}\bigg)>\frac{1-kr+r^2}{1-r^2} $$
\end{proof}
Hence for $R<R_{S^\ast}$ the left hand side of the preceding inequality is positive which implies (2.12).
We note that all results are sharp because of extremal function is 
$$f_\ast(z)=\frac{z(1-z)^{\frac{k}{2}-1}}{(1+z)^{\frac{k}{2}+1}}$$
Indeed,
$$z\frac{f_\ast'(z)}{f_\ast(z)}=\frac{1-kz+z^2}{1-z^2}=\left(\frac{k}{4}+\frac{1}{2}\right)\frac{1+z}{1-z}-\left(\frac{k}{4}-\frac{1}{2}\right)\frac{1-z}{1+z}$$
Thus, $f_\ast(z)\in R_k$ and $f_\ast(z)$ is extremal function.
\begin{lem} Let $p(z)=1+p_1z+p_2z^2+...$ be an element of $\pe_k$, then
$$|p_n|\leq k$$
\end{lem}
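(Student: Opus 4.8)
The plan is to extract the $n$-th Taylor coefficient of $p$ from one of the two equivalent descriptions of the class $\pe_k$ given in the introduction and then reduce everything to the classical Carathéodory coefficient bound for $\pe=\pe_2$. Concretely, I would start from representation (1.12): write $p(z)=\left(\frac{k}{4}+\frac12\right)p_1(z)-\left(\frac{k}{4}-\frac12\right)p_2(z)$ with $p_1,p_2\in\pe$, say $p_1(z)=1+\sum_{n\ge1}c_nz^n$ and $p_2(z)=1+\sum_{n\ge1}d_nz^n$. Comparing the coefficients of $z^n$ on both sides gives $p_n=\left(\frac{k}{4}+\frac12\right)c_n-\left(\frac{k}{4}-\frac12\right)d_n$.

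Next I would recall that every function in $\pe$ (the Carathéodory class, i.e. $p(0)=1$ and $\mathrm{Re}\,p>0$) has $|c_n|\le2$ and $|d_n|\le2$ for all $n\ge1$. Since $k\ge2$, both weights $\frac{k}{4}+\frac12$ and $\frac{k}{4}-\frac12$ are nonnegative, so the triangle inequality yields $|p_n|\le 2\left(\frac{k}{4}+\frac12\right)+2\left(\frac{k}{4}-\frac12\right)=k$, which is the assertion. An alternative route, which I would mention in passing, is to work directly from the Herglotz-type representation (1.11): expanding $\frac{1+ze^{-it}}{1-ze^{-it}}=1+2\sum_{n\ge1}e^{-int}z^n$ and integrating termwise gives $p_n=\int_0^{2\pi}e^{-int}\,d\mu(t)$, whence $|p_n|\le\int_0^{2\pi}|d\mu(t)|\le k$ by (1.6).

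There is essentially no obstacle here; the only points needing care are that the estimate relies on the Carathéodory inequality for $\pe$ (equivalently, on the total-variation constraint $\int|d\mu|\le k$), and that the hypothesis $k\ge2$ is exactly what makes the decomposition in (1.12) have nonnegative coefficients, so that no sign cancellation spoils the triangle inequality. I would also record that the bound is sharp: for $p_\ast(z)=\left(\frac{k}{4}+\frac12\right)\frac{1+z}{1-z}-\left(\frac{k}{4}-\frac12\right)\frac{1-z}{1+z}\in\pe_k$ a direct computation gives $|p_n|=k$ for every odd $n$, matching the extremal function $f_\ast$ used above.
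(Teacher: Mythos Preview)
Your proposal is correct and follows exactly the paper's own argument: your main route via the decomposition (1.12) together with the Carath\'eodory bound $|c_n|,|d_n|\le 2$ is the paper's Method~I, and your alternative via the Herglotz-type representation (1.11) is the paper's Method~II. The sharpness remark with $p_\ast$ is an extra observation not present in the paper's proof but consistent with the extremal function $f_\ast$ used elsewhere.
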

\begin{proof} Method I.
Since $p(z)\in \pe_k$, then we have
\begin{align*}
p(z)&=\left(\frac{k}{4}+\frac{1}{2}\right)p_{1}(z)-\left(\frac{k}{4}-\frac{1}{2}\right)p_{2}(z) \\
&=\left(\frac{k}{4}+\frac{1}{2}\right)(1+a_1z+a_2z^2+...)-\left(\frac{k}{4}-\frac{1}{2}\right)(1+b_1z+b_2z^2+...)
\end{align*}
Then we have
$$p_n=\left(\frac{k}{4}+\frac{1}{2}\right)a_n-\left(\frac{k}{4}-\frac{1}{2}\right)b_n$$
Thus
\begin{align*}
|p_n|&=\bigg|\left(\frac{k}{4}+\frac{1}{2}\right)a_n-\left(\frac{k}{4}-\frac{1}{2}\right)b_n\bigg| \\
&\leq\left(\frac{k}{4}+\frac{1}{2}\right)|a_n|+\left(\frac{k}{4}-\frac{1}{2}\right)|b_n|\\
&\leq\left(\frac{k}{4}+\frac{1}{2}\right)2+\left(\frac{k}{4}-\frac{1}{2}\right)2\\
\end{align*}
This shows that,
$$|p_n|\leq k$$

Method II.
Since $p(z)\in \pe_k$, then $p(z)$ can be written in the form
$$p(z)=\frac{1}{2\pi}\int^{2\pi}_{0}\frac{1+ze^{-it}}{1-ze^{-it}}d\mu(t)$$
and
$$\int^{2\pi}_{0} d\mu(t)=2\pi \quad\text{and}\quad \int^{2\pi}_{0} |d\mu(t)| \leq k\pi.$$
Then
\begin{align*}
p(z)=1+p_1z+p_2z^2+...&=\frac{1}{2\pi}\int^{2\pi}_{0}\frac{1+ze^{-it}}{1-ze^{-it}}d\mu(t)\\
&=\frac{1}{2\pi}\int^{2\pi}_{0}\frac{1+ze^{-it}-ze^{-it}+ze^{-it}}{1-ze^{-it}}d\mu(t)\\
&=\frac{1}{2\pi}\int^{2\pi}_{0}\bigg(1-\frac{2ze^{-it}}{1-ze^{-it}}\bigg)d\mu(t)\\
|p_n|&\leq \frac{1}{\pi}\int^{2\pi}_{0}|d\mu(t)|\leq k
\end{align*}
is obtained. 

We note that this lemma was proved first by K.I. Noor \cite{Noor} (Method II).
\end{proof}
\begin{thm} Let $f(z)$ be an element of $R_k$, then
\begin{equation}
|a_n|\leq \frac{1}{(n-1)!}\prod_{\nu=0}^{n-2}(k+\nu)
\end{equation}	
\end{thm}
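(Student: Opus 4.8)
The plan is to use the analytic characterization $\dfrac{zf'(z)}{f(z)}\in P_k$ from (1.15) together with the coefficient estimate $|p_n|\le k$ of Lemma 2.6, and then argue by induction on $n$.

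First I would set $p(z)=\dfrac{zf'(z)}{f(z)}=1+p_1z+p_2z^2+\cdots\in P_k$, which rewrites as the identity $zf'(z)=f(z)p(z)$. Expanding both sides as power series (with the normalization $a_1=1$) and comparing coefficients of $z^n$ yields the recursion
\begin{equation*}
(n-1)\,a_n=\sum_{j=1}^{n-1}a_j\,p_{n-j},\qquad n\ge 2 .
\end{equation*}
Taking moduli and invoking Lemma 2.6 gives $(n-1)|a_n|\le k\sum_{j=1}^{n-1}|a_j|$, which is the engine of the induction.

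Next I would introduce $c_n=\dfrac{1}{(n-1)!}\prod_{\nu=0}^{n-2}(k+\nu)=\binom{k+n-2}{n-1}$, so that $c_1=1$, $c_2=k$, and the asserted bound is exactly $|a_n|\le c_n$. The base cases $|a_1|=1=c_1$ and $|a_2|=|p_1|\le k=c_2$ are immediate. Assuming $|a_j|\le c_j$ for all $j<n$, the inequality above gives $(n-1)|a_n|\le k\sum_{j=1}^{n-1}c_j$, so it remains to verify the identity
\begin{equation*}
k\sum_{j=1}^{n-1}c_j=(n-1)\,c_n .
\end{equation*}
This follows by a short secondary induction on $n$: if $k\sum_{j=1}^{n-2}c_j=(n-2)c_{n-1}$, then $k\sum_{j=1}^{n-1}c_j=(n-2+k)c_{n-1}$, and $(k+n-2)c_{n-1}=\dfrac{1}{(n-2)!}\prod_{\nu=0}^{n-2}(k+\nu)=(n-1)c_n$ since the extra factor $k+n-2$ is precisely the $\nu=n-2$ term of the product. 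Combining this with $(n-1)|a_n|\le k\sum_{j=1}^{n-1}c_j$ yields $|a_n|\le c_n$, closing the induction.

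The only step that is not purely mechanical is spotting and proving the summation identity $k\sum_{j=1}^{n-1}c_j=(n-1)c_n$; phrased in terms of generalized binomial coefficients it is the classical "hockey-stick" relation, so even this is routine. Equality throughout holds for the extremal function $f_\ast$ exhibited after Corollary 2.5, which shows the bound is sharp.
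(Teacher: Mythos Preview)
Your argument is correct and follows essentially the same route as the paper: set $p(z)=zf'(z)/f(z)\in P_k$, derive the recursion $(n-1)a_n=\sum_{j=1}^{n-1}a_jp_{n-j}$ from $zf'(z)=f(z)p(z)$, apply Lemma~2.6 to obtain $(n-1)|a_n|\le k\sum_{j=1}^{n-1}|a_j|$, and finish by induction. The only difference is that you spell out the induction step via the identity $k\sum_{j=1}^{n-1}c_j=(n-1)c_n$, whereas the paper simply asserts ``Induction shows that\ldots''; your version is the more complete of the two.
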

\begin{proof} Since $f(z)\in R_k$, then we have
$$z\frac{f'(z)}{f(z)}=p(z)$$
where $p(z)\in \pe_k$. Thus
$$zf'(z)=f(z)p(z)$$
Comparing the coefficients in both sides of $zf'(z)=f(z)p(z)$, we obtain the recursion formula
$$a_n=\frac{1}{n-1}\sum_{\nu=1}^{n-1}p_{n-\nu}a_\nu, \quad n\geq 2$$
and therefore by Lemma 2.6,
$$|a_n|=\frac{k}{n-1}\sum_{\nu=1}^{n-1}|a_\nu|$$
Induction shows that
$$|a_n|\leq \frac{1}{(n-1)!}\prod_{\nu=0}^{n-2}(k+\nu).$$
\end{proof}
\begin{cor} For $k=2$, we obtain $|a_n|\leq n$. This inequality is well known coefficient inequality for starlike functions.
	
Indeed,
$$|a_n|\leq \frac{1}{(n-1)!}\prod_{\nu=0}^{n-2}(k+\nu)=\frac{k(k+1)(k+2)...(k+(n-2))}{(n-1)!}.$$
If we take $k=2$,
$$|a_n|\leq \frac{2.3.4...(n-2).(n-1).n}{(n-1)!}=n$$
\end{cor}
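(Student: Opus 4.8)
The plan is to obtain this purely as a specialization of Theorem~2.7, whose bound $|a_n|\le \frac{1}{(n-1)!}\prod_{\nu=0}^{n-2}(k+\nu)$ already holds for every $k\ge 2$. I would not reprove that inequality; the entire content of the corollary is to substitute $k=2$ and evaluate the resulting product in closed form. So the only task is to check that the product collapses to a factorial.

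With $k=2$ the factors $(k+\nu)$ become $(2+\nu)$, and as $\nu$ runs over $0,1,\dots,n-2$ these are precisely the consecutive integers from $2$ up to $n$. Hence
$$\prod_{\nu=0}^{n-2}(2+\nu)=2\cdot 3\cdot 4\cdots n=n!,$$
and dividing by $(n-1)!$ gives
$$|a_n|\le \frac{n!}{(n-1)!}=n,$$
which is the claimed estimate.

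The only point demanding any attention is the index bookkeeping: one must verify that the product contains exactly $n-1$ factors, running from the first ($\nu=0$, value $2$) to the last ($\nu=n-2$, value $n$), so that the quotient $n!/(n-1)!$ cancels cleanly to $n$. There is no analytic difficulty whatsoever. To finish, I would remark that the resulting bound $|a_n|\le n$ is the classical sharp coefficient estimate for starlike functions, equality being realized by the $k=2$ extremal function $f_\ast(z)=z/(1+z)^2$ --- a rotation of the Koebe function --- for which $|a_n|=n$ at every $n$.
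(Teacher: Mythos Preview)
Your proposal is correct and follows exactly the same route as the paper: substitute $k=2$ into the bound of Theorem~2.7, recognize $\prod_{\nu=0}^{n-2}(2+\nu)=2\cdot 3\cdots n=n!$, and divide by $(n-1)!$ to obtain $|a_n|\le n$. Your added remark on the extremal function $f_\ast(z)=z/(1+z)^2$ (the $k=2$ case of the paper's $f_\ast$) is a harmless supplement consistent with the paper's own sharpness discussion.
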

\begin{cor} Let $f(z)$ be an element of $V_k$, then
\begin{equation}
|a_n|\leq \frac{1}{n!}\prod_{\nu=0}^{n-2}(k+\nu)
\end{equation}	
\end{cor}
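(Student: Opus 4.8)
The plan is to reduce this corollary to Theorem 2.7 by exploiting the Alexander-type relation between $V_k$ and $R_k$ recorded in the introduction, namely $f \in V_k \Leftrightarrow zf'(z) \in R_k$. So I would start by writing $f(z) = z + a_2 z^2 + a_3 z^3 + \cdots \in V_k$ and forming $g(z) = zf'(z)$. A direct term-by-term differentiation gives $g(z) = z + 2a_2 z^2 + 3a_3 z^3 + \cdots = z + \sum_{n\ge 2} n a_n z^n$, so if we write $g(z) = z + b_2 z^2 + b_3 z^3 + \cdots$ then $b_n = n a_n$ for every $n \ge 2$.

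Next I would invoke the Alexander relation to conclude $g \in R_k$, and then apply Theorem 2.7 to $g$: this yields $|b_n| \le \frac{1}{(n-1)!}\prod_{\nu=0}^{n-2}(k+\nu)$. Substituting $b_n = n a_n$ and dividing by $n$ gives
\begin{equation*}
|a_n| = \frac{|b_n|}{n} \le \frac{1}{n \cdot (n-1)!}\prod_{\nu=0}^{n-2}(k+\nu) = \frac{1}{n!}\prod_{\nu=0}^{n-2}(k+\nu),
\end{equation*}
which is exactly (2.14). As a sanity check one can set $k=2$, where the bound collapses to $|a_n| \le \frac{2\cdot 3\cdots n}{n!} = 1$, recovering the classical coefficient estimate $|a_n|\le 1$ for convex functions, consistent with $V_2 = \mathcal{C}$.

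There is essentially no obstacle here: the only thing to be careful about is the bookkeeping in the relation $b_n = n a_n$ (making sure the normalization $f(z) = z + \cdots$ is preserved, which it is since $g(z) = zf'(z) = z + \cdots$), and citing the Alexander-type equivalence in the correct direction. Everything substantive has already been done in Theorem 2.7 via the recursion $a_n = \frac{1}{n-1}\sum_{\nu=1}^{n-1} p_{n-\nu} a_\nu$ together with Lemma 2.6; this corollary is a purely formal consequence.
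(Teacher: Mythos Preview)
Your proposal is correct and follows exactly the paper's own approach: the paper's proof simply invokes Pinchuk's Alexander-type relation $f\in V_k \Leftrightarrow zf'(z)\in R_k$ to reduce to Theorem~2.7, and your write-up spells out the coefficient bookkeeping $b_n = n a_n$ that the paper leaves implicit.
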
	
\begin{proof} Using the theorem of Pinchuk
$$f(z)\in V_k\Leftrightarrow zf'(z)\in R_k$$
we get (2.14).
\end{proof}
\begin{cor} For $k=2$, we obtain $|a_n|\leq 1$. This inequality is well known coefficient inequality for convex functions.
\end{cor}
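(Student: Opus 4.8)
The plan is to simply specialize the general coefficient bound for $V_k$ obtained in Corollary 2.10 to the value $k=2$, in exact parallel with the computation carried out in Corollary 2.9 for the class $R_k$. First I would recall that Corollary 2.10 gives, for $f(z)\in V_k$,
\[
|a_n|\leq \frac{1}{n!}\prod_{\nu=0}^{n-2}(k+\nu),
\]
so that the entire task reduces to evaluating the right-hand side at $k=2$.

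Next I would expand the product explicitly: with $k=2$ the factors are $k+0,\,k+1,\,\dots,\,k+(n-2)$, i.e. $2,\,3,\,\dots,\,n$, which is a product of $n-1$ consecutive integers starting at $2$. Hence
\[
\prod_{\nu=0}^{n-2}(2+\nu)=2\cdot 3\cdot 4\cdots n=n!,
\]
and therefore $|a_n|\leq \dfrac{n!}{n!}=1$. This is precisely the classical coefficient estimate $|a_n|\leq 1$ for convex univalent functions, and since the bound in Corollary 2.10 is attained by the appropriate Alexander transform of the extremal function $f_\ast$, the estimate is sharp here as well.

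The only point requiring the slightest care is the bookkeeping of the index range in the product $\prod_{\nu=0}^{n-2}$ — one must check that it contributes exactly $n-1$ factors and that they run from $2$ up to $n$, so that the product equals $n!$ rather than, say, $n!/1$ miscounted or $n!\cdot$(extra factor). There is no genuine obstacle: the result is an immediate corollary of Corollary 2.10, mirroring the passage from Theorem 2.7 to Corollary 2.9.
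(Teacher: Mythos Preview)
Your proposal is correct and matches the paper's intended argument exactly: the paper gives no separate proof for this corollary, treating it as an immediate specialization of the $V_k$ bound (your ``Corollary 2.10,'' numbered 2.9 in the paper) at $k=2$, in the same way the preceding starlike corollary was handled. The only cosmetic issue is that your numbering is shifted by one relative to the paper.
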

We note that all these inequalities are sharp because extremal function is,
$$f_\ast(z)=\frac{z(1-z)^{\frac{k}{2}-1}}{(1+z)^{\frac{k}{2}+1}}.$$

\footnotesize
\vspace{1cm}

\textsc{Ya\c{s}ar Polato\~{g}lu}\\
Department of Mathematics and Computer Sciences\\
\.{I}stanbul K\"{u}lt\"{u}r University, \.{I}stanbul, Turkey\\ 
e-mail: y.polatoglu@iku.edu.tr\\

\textsc{Yasemin Kahramaner}\\
Department of Mathematics,\\
\.{I}stanbul Ticaret University, \.{I}stanbul, Turkey \\
e-mail: ykahramaner@iticu. edu.tr\\

\textsc{Arzu Yemi\c{s}ci \c{S}en}\\
Department of Mathematics and Computer Sciences\\
\.{I}stanbul K\"{u}lt\"{u}r University, \.{I}stanbul, Turkey\\ 
e-mail: \\

\end{document}